\definecolor{customblue}{HTML}{166dde}
\let\@internalcite\cite
\def\cite{\def\citeauthoryear##1##2{##1, ##2}\@internalcite}
\def\shortcite{\def\citeauthoryear##1{##2}\@internalcite}
\def\@biblabel#1{\def\citeauthoryear##1##2{##1, ##2}[#1]\hfill}
\newcommand{\R}{\mathbb{R}}
\newcommand{\M}{\mathcal{M}}
\newcommand{\A}{\mathcal{A}}
\newcommand{\ord}{\mathcal{O}}
\newcommand{\thupper}{^{\text{th}}}
\newtheorem{theorem}{Theorem}[section]
\newtheorem{corollary}[theorem]{Corollary}  
\theoremstyle{definition}
\theoremstyle{definition}
\theoremstyle{definition}
\theoremstyle{remark}
\newtheorem{remark}[theorem]{Remark}         
\renewcommand{\thm@space@setup}{%
  \thm@preskip=\parskip
  \thm@postskip=\parskip
}
\title{
Manifold Diffusion Geometry: \\ Curvature, Tangent Spaces, and Dimension}
\author{Iolo Jones \\ Durham University}
\date{December 2024}
\begin{document}
\maketitle

\begin{abstract}
We introduce novel estimators for computing the curvature, tangent spaces, and dimension of data from manifolds, using tools from diffusion geometry.
Although classical Riemannian geometry is a rich source of inspiration for geometric data analysis and machine learning, it has historically been hard to implement these methods in a way that performs well statistically.
Diffusion geometry lets us develop Riemannian geometry methods that are accurate and, crucially, also extremely robust to noise and low-density data.
The methods we introduce here are comparable to the existing state-of-the-art on ideal dense, noise-free data, but significantly outperform them in the presence of noise or sparsity.
In particular, our dimension estimate improves on the existing methods on a challenging benchmark test when even a small amount of noise is added.
Our tangent space and scalar curvature estimates do not require parameter selection and substantially improve on existing techniques.
\end{abstract}

\tableofcontents

\section{Introduction}

For many problems in geometric and topological data analysis, we can safely assume that the data lie on or near a manifold.
This assumption is called the \q{manifold hypothesis} and means that the classical theory of Riemannian geometry can, in principle, be applied to the problem.
However, on a manifold, it is possible to take limits of quantities, and so define derivatives, vector fields, integrals, and all the other calculus tools that give Riemannian geometry its power.
Applying this paradigm directly to data is hard: a finite data set is discrete, so limits cannot exist in the same way, and real-world data is often noisy.

This paper introduces a range of novel techniques for manifold data, using the theory of \textit{diffusion geometry} \cite{jones2024diffusion, jones2026computing}.
Diffusion geometry recasts the classical theory of Riemannian geometry in terms of the \q{heat flow} on the manifold, which is an example of a stochastic process called a \textit{Markov diffusion}.
Markov diffusion processes provide extremely robust statistics and powerful computational tools: they have recently underpinned the development of generative image and video artificial intelligence models \cite{sohl2015deep, ho2020denoising, song2020score}, and there are many well-developed techniques for estimating the heat flow on a manifold \cite{COIFMAN20065, berry2016variable}.
By estimating the heat flow from data we obtain an estimate for the Laplacian operator $\Delta$, and with it can construct most of the important objects in Riemannian geometry.
In particular, we will construct new estimators for the tangent spaces to the manifold, the dimension of the manifold at each point and also globally, and the Riemann, Ricci, and scalar curvatures (see Figure \ref{fig: intro}).
These inherit the excellent statistical properties of the underlying Markov diffusion, and so are extremely robust to noise, outliers, and low sample density.

\begin{figure}[!h]
    \centering
    \includegraphics[width=1\linewidth]{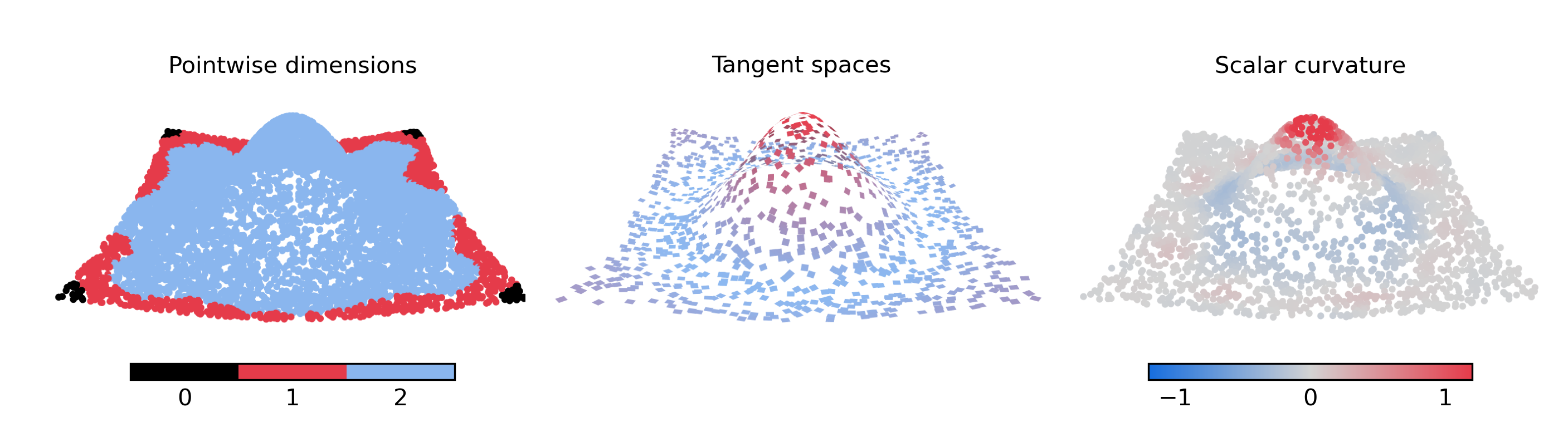}
    \captionsetup{width=0.8\linewidth}
    \caption{The pointwise dimensions, tangent spaces, and scalar curvature of a manifold.}
    \label{fig: intro}
\end{figure}

The problems of dimension and tangent space estimation are very well studied: we compare our estimates to the state-of-the-art and find that, for \textit{high quality} data where the sample is large and the noise level is low, our methods achieve comparable results, but, crucially, for \textit{low quality} sparse and noisy data, our methods significantly outperform all existing approaches.
There is comparatively little literature on curvature estimation: our method is (to our knowledge) only the third to compute scalar curvature on a general manifold \cite{sritharan2021computing,hickok2023intrinsic}, the second to compute the Ricci curvature \cite{ache2022approximating}, and the first to compute the Riemann curvature.
Like \cite{sritharan2021computing}, we will take an \textit{extrinsic} approach and compute curvature using the Hessian of the ambient coordinates (second fundamental form).
This has a significant regularising effect that improves performance, but means that such approaches apply less broadly than the purely intrinsic approach of \cite{hickok2023intrinsic} that does not need coordinates, and may extend to non-manifold data.
In quantitative comparisons, we again find that, although similar on extremely dense and clean data, our new diffusion geometry estimators significantly outperform the others when faced with lower density and noise.

\subsection{Avoiding the \q{hard neighbourhood} paradigm
}

Most existing techniques for computational geometry and topology use combinatorial objects like graphs or simplicial complexes as a model for geometry.
In other words, they resolve the paradox of defining limits on data by defining a \q{neighbourhood} to a point as the set of all points within some given distance.
This can be used to measure global geometry or topology, or local data like tangents and the Hessian through linear regression on the neighbourhood.
A strong theme that emerges consistently throughout this work is that methods like these contain two inherent issues:
\begin{enumerate}
    \item There is a hard cut-off for the edge of a neighbourhood, so points move into and out of each other's neighbourhoods discontinuously.
    It is, therefore, hard to make these methods robust to noise.
    \item There is a trade-off between precision (by taking a smaller neighbourhood) and robustness to noise (by taking a larger neighbourhood).
    This trade-off is usually controlled by a parameter which must be set by the user (and is quantified in \cite{lim2024tangent}).
\end{enumerate}
We will call this approach the \textit{hard neighbourhood paradigm}.
Some \textit{hard neighbourhood} methods, like most of the existing dimension estimation techniques we test below, as well as persistent homology \cite{robins1999towards, edelsbrunner2002topological, zomorodian2004computing}, mitigate the second issue by measuring behaviour over a range of neighbourhood sizes.
This improves performance and can guarantee continuity \cite{cohen2005stability}, but does not avoid the inherently poor robustness of hard neighbourhoods, as we saw for persistent homology in \cite{jones2024diffusion}, and as we will see repeatedly below.

Conversely, diffusion geometry encodes the \q{locality} information in the Gaussian heat kernel (the transition probability of the diffusion process).
Rather than setting a hard cut-off for the edge of a neighbourhood, the kernel smoothly incorporates all the data into and out of consideration, which makes it far more robust to noise.
The kernel does have a bandwidth parameter for the size of the \q{neighbourhood}, but, because that neighbourhood is soft, the results do not depend as sensitively on the choice of bandwidth.
We will use \textit{variable bandwidth} kernels with straightforward automatic parameter selection, and so will suffer from neither of the two problems outlined above.
Our methods are then effectively parameter-free, and highly robust to noise while still being very accurate.

In the infinite-data limit the \textit{hard neighbourhood} and \textit{soft neighbourhood} paradigms converge, but, as statistics on finite data, they are very different.

\subsection{Apology for low intrinsic dimensions}
\label{sub: apology}

Even though the methods presented here are valid in any dimension, we will only test and exemplify them on manifolds of at most 3 dimensions.
This is partly because many of the important applications of computational geometry are to spatial data, which is at most 3-dimensional.
Mainly, however, it is because this paper is about \textit{robust} geometric inference in the presence of noise and outliers, and the extent to which we can tell the signal from the noise depends on the density of the data.

For example, a circle could clearly be identified from a sample of 50 points even with significant noise because 50 points are a dense enough sample for a circle.
One could probably identify a sphere from a sample of 50 points, but this analysis would not survive the addition of much noise at all, because, for a 2-dimensional object, a sample of 50 points is suddenly far too sparse.
It would take a similar density of data, perhaps $50^2 = 2,500$ points, to achieve the same level of certainty as 50 points provided for the circle.
Robust geometric inference for a 3-sphere already seems unrealistic, needing perhaps $50^3 = 125,000$ data.
This exponential growth means the very question of robust geometric inference is only meaningful when the intrinsic dimension is low: it is \q{cursed by dimensionality}.

Many methods (some of which are discussed below) can, for example, precisely identify a 20-dimensional manifold as such in a 50-dimensional ambient space.
We will not address this kind of question here, because, we argue, it could never realistically be asked of data with any amount of noise.
The ambient dimension may be very high, but the \textit{intrinsic dimension} will always be low.
The same limitations apply to the Ricci and Riemann curvatures, which only contribute new information in dimensions 3 and 4: the smallest nontrivial use case for Ricci would be a 3-dimensional manifold embedded in 4-dimensional space.
We introduce estimators for these two curvatures, but, given this curse of dimensionality, they are probably not going to be very useful.

\subsection{Is the manifold hypothesis reasonable, and does it matter?}

The theory of diffusion geometry uses Markov diffusions to extend the familiar Riemannian geometry beyond manifolds to more general probability spaces.
Part of the motivation for this was that most of the time in practice, the manifold hypothesis is an unrealistic expectation.
One of the main reasons is that manifolds have a constant integer dimension everywhere, which can hardly be said of most real data (whatever \q{dimension} is taken to mean for data).
Manifolds also cannot have \q{branches}, like three lines meeting at a point or three planes meeting in a line.
However, the methods described here and the Laplacian estimation that underpins them (as well as all the existing \textit{hard neighbourhood} methods discussed) are only defined \textit{locally}, so will still apply to non-manifold data wherever that data looks locally like a manifold.
As such, the manifold hypothesis need only be a metaphor, because the methods discussed here can, informally, be applied more generally.

\subsection{Paper layout}

In Section \ref{sec: geometry background} we review the necessary background in Riemannian geometry, and in Section \ref{sec: computation} we use the Laplacian $\Delta$ to describe estimators for the objects mentioned above.
In Sections \ref{sec: dimension estimation}, \ref{sec: tangent space estimation}, and \ref{sec: curvature estimation} we compute examples and comparative tests of each method.

The code for the methods and examples in this paper is available at \url{https://github.com/Iolo-Jones/ManifoldDiffusionGeometry}.

\section{Geometry background}
\label{sec: geometry background}

In modern differential geometry, manifolds $\M$ are usually defined intrinsically (just in terms of measurements made \textit{within} $\M$) and any embedding of $\M$ into $\R^D$ is viewed as additional structure.
However, in data science, we usually encounter manifolds already embedded in $\R^D$, and can only infer the intrinsic geometry from the \textit{extrinsic} geometry.
In this sense, the extrinsic perspective is more natural here, and we will develop tools in this paper using the extrinsic definitions of geometric objects, which we now review.

\subsection{Tangent spaces and the metric}

Suppose $\M \subseteq \R^D$ is a $d$-dimensional submanifold of $\R^D$ (when $d=2$ we call $\M$ a surface).
A point $p \in \M$ has a tangent space $T_p\M$, corresponding to a $d$-dimensional linear subspace of $\R^D$.
We can equip the tangent space with an inner product $g$ by setting $g(\vec{v},\vec{w}) = \vec{v} \cdot \vec{w}$ for all tangent vectors $\vec{v},\vec{w}\in T_p\M$.
This inner product is called the \textit{induced metric} (or the \textit{first fundamental form} when $\M$ is a surface) and turns $\M$ into an \textit{isometrically embedded} Riemannian submanifold of $\R^D$.

\subsection{Curvature of hypersurfaces}

We first define the curvature of $\M$ in the simpler case that $\M$ is a hypersurface (i.e. $d + 1 = D$) so has, up to sign, a single unit normal $\vec{n}_p$ at each $p\in\M$.
If $f:\M \rightarrow \R$ is a function and $X,Y$ are vector fields, we can define the Hessian $H(f)(X,Y): = X(Y(f)) - (\nabla_X Y)(f)$.
This generalises the matrix of second derivatives
\[
\Big( \frac{\partial^2 f}{\partial x_i \partial x_j} \Big)_{i,j}
\]
for functions on Euclidean space, and measures the \q{curvature} of the function $f$ with respect to $\M$.
If we let $n_p$ denote the function on $\M$ given by $q \mapsto \vec{n}_p \cdot q$, then, near $p$, the manifold $\M$ is locally diffeomorphic to the graph of the function $n_p$, and we can characterise the curvature of $\M$ in terms of the Hessian $H_p(n_p)$.
The Hessian is trilinear in $f,X,Y$, so $H(f)$ is a quadratic form on $T_p\M$ for each $p\in\M$ (it is a $d\times d$ symmetric matrix).


In high dimensions, the geometry is fully characterised by the \textit{Riemann curvature tensor}, which, if $X,Y,Z$ are vector fields, is an operator $Z \mapsto R(X,Y)Z$.
For our purposes, we define it through the \textit{Gauss equation}
\[
g(R(X,Y)Z,W)(p) = H_p(n_p)(X,Z) H_p(n_p)(Y,W) - H_p(n_p)(Y,Z)H_p(n_p)(X,W).
\]
Notice that this equation is unchanged if we instead took the opposite unit normal $-\vec{n}_p$.
This equation relates the intrinsic curvature of $\M$ (measured by $R$) to the curvature of the ambient coordinate functions \textit{as viewed from within $\M$} (measured by $H_p(n_p)$).
If we let $x_i$ be local coordinates near $p$ such that $\nabla_px_i$ are an orthonormal basis for the tangent space $T_p\M$ and set 
\[
R_{ijkl} = g(R(\nabla x_i,\nabla x_j)\nabla x_k,\nabla x_l)(p)
\quad\quad
\alpha_{ij} = H_p(n_p)(\nabla_px_i,\nabla_px_j)
\]
then we get
\[
R_{ijkl} = \alpha_{ik}\alpha_{jl} - \alpha_{jk}\alpha_{il}.
\]

In other words, we can write the Hessian matrix $H_p(n_p)$ in the orthonormal basis $\nabla_px_i$ as $(\alpha_{ij})_{i,j}$.
The simpler Ricci and scalar curvatures are defined using $R$ as
\[
\text{Ric}_{ij} = \sum_{k=1}^d R_{kikj} = \sum_{k=1}^d (\alpha_{kk}\alpha_{ij} - \alpha_{ik}\alpha_{jk})
\]
and
\[
S = \sum_{i = 1}^d \text{Ric}_{ii} = \sum_{i,j = 1}^d R_{ijij} = 2\sum_{i<j} (\alpha_{ii}\alpha_{jj} - \alpha_{ij}^2).
\]

Hypersurfaces \textit{that are orientable} also have a generalised notion of Gaussian curvature, given by $\kappa = \det(H_p(n_p))$.
Note that, when $d=2$, $S = 2(\alpha_{11}\alpha_{22} - \alpha_{12}^2) = 2\kappa$, but in all other dimensions the scalar and Gaussian curvature are not proportional.

We can simplify the above expressions by diagonalising the Hessian $H_p(n_p)$ to get an orthonormal basis for $T_p\M$ of eigenvectors $\nabla_px_i$ with eigenvalues $\lambda_i$, so now $\alpha_{ij} = \lambda_i\delta_{ij}$.
We call the $\lambda_i$ the principal curvatures and the $\nabla_px_i$ the principal curvature directions.
The formula for the Riemann curvature now becomes
\[
R_{ijkl} = 
\begin{cases}
\lambda_i\lambda_j & \text{if } i=k \neq j=l \\
-\lambda_i\lambda_j & \text{if } i=l \neq j=k \\
0 & \text{otherwise}
\end{cases}
\]
and the Ricci curvature becomes
\[
\text{Ric}_{ij} = \sum_{k=1}^d (\lambda_k\lambda_i \delta_{ij} - \lambda_i\lambda_j \delta_{ik}\delta_{jk}) = \big(\lambda_i \sum_{k\neq i} \lambda_k \big) \delta_{ij}.
\]
Notice that this expression is diagonal, so the Ricci curvature has the same eigenvectors as $H_p(n_p)$ (the principal curvature directions).
The scalar curvature is
\[
S = 2\sum_{i<j} \lambda_i\lambda_j.
\]
In the special case of surfaces, we get $\text{Ric} = \lambda_1\lambda_2 I$ and $S = 2\lambda_1\lambda_2$.

\subsection{Curvature of general manifolds}

In the general case, when $\M$ is not a hypersurface, we have a $(D-d)$-dimensional normal space at each point $p \in \M$.
If $\vec{n}_p^\ell$, $\ell = 1,...,D-d$ is an orthonormal basis for the normal space at $p$, and $X,Y$ are vector fields, we define the \textit{second fundamental form} at $p$ to be the vector
\[
\mathrm{I\!I}_p (X,Y) = \sum_{\ell=1}^{D-d} H_p(n_p^\ell)(X,Y) \vec{n}_p^\ell.
\]
The Gauss equation now reads

\begin{equation*}
\begin{split}
g(R(X,Y)Z,W)(p) 
&= \mathrm{I\!I}_p(X,Z) \cdot \mathrm{I\!I}_p(Y,W) - \mathrm{I\!I}_p(Y,Z) \cdot \mathrm{I\!I}_p(X,W) \\
&= \sum_{\ell=1}^{D-d} \big[ H_p(n_p^\ell)(X,Z) H_p(n_p^\ell)(Y,W) - H_p(n_p^\ell)(Y,Z)H_p(n_p^\ell)(X,W) \big]. \\
\end{split}
\end{equation*}

We can therefore apply the same analysis as before to each unit normal $\vec{n}_p^\ell$ separately, and then sum them up.
Writing the Hessian $H_p(n_p^\ell)(\nabla_px_i,\nabla_px_j) = (\alpha^\ell_{ij})_{i,j}$, we obtain the Riemann
\[
R_{ijkl} = \sum_{\ell=1}^{D-d} (\alpha^\ell_{ik}\alpha^\ell_{jl} - \alpha^\ell_{jk}\alpha^\ell_{il}),
\]
Ricci
\[
\text{Ric}_{ij} = \sum_{\ell=1}^{D-d} \sum_{k=1}^d (\alpha^\ell_{kk}\alpha^\ell_{ij} - \alpha^\ell_{ik}\alpha^\ell_{jk}),
\]
and scalar curvatures
\[
S = \sum_{\ell=1}^{D-d} \sum_{i,j=1}^d (\alpha^\ell_{ii}\alpha^\ell_{jj} - (\alpha^\ell_{ij})^2).
\]

These equations hold whatever choice of orthonormal basis $n^\ell_p$ we made.



\section{Computing the dimension, tangent space, and curvature}
\label{sec: computation}

Suppose $\M \subseteq \R^D$ is a $d$-dimensional submanifold of $\R^D$ and $\{p_1,...,p_n\} \subset \M$ is a sample of data from $\M$.
We now outline an approach to computing the dimension $d$, the tangent spaces $T_{p_i}\M$, and the Riemann, Ricci, and scalar curvatures.

The code is available at \url{https://github.com/Iolo-Jones/ManifoldDiffusionGeometry}.

\subsection{Diffusion maps and diffusion geometry}

The geometry of $\M$ is captured by its Riemannian metric $g$, which we can access through the \q{carré du champ} formula
\begin{equation}\label{cdc formula}
g(\nabla f, \nabla h) = \frac{1}{2}\big(f\Delta(h) + h \Delta(f) - \Delta(fh) \big),
\end{equation}
which expresses the Riemannian metric of the vector fields $\nabla f$ and $\nabla h$ in terms of the Laplacian $\Delta$.
Our strategy will be to estimate $\Delta$ from the data, then use the carré du champ formula to compute the first and second fundamental forms.
This approach (called \textit{diffusion geometry}) was developed extensively in \cite{jones2024diffusion} where we used the carré du champ to develop a theory of differential geometry for general probability spaces and data sampled from them; here we apply those methods to the special case of data from manifolds.

Let $X = \{p_1,...,p_n\}$ be our sample of $n$ data.
If $\A = \{f: X \rightarrow \R\}$ is the algebra of functions on $X$, we can identify $\A \cong \R^n$ where the unit vector $e_i \in \R^n$ corresponds to the function $p_j \mapsto \delta_{ij}$.
There is a vast body of work on the problem of estimating $\Delta$ from $X$, and we will use the \textit{diffusion maps} method \cite{COIFMAN20065} which forms a normalised heat kernel matrix from $X$ to estimate $\Delta$ on a compact manifold.
In other words, we compute the $n \times n$ matrix
\[
K_\epsilon(p_i,p_j) = \exp\Big(- \frac{\|p_i - p_j\|^2}{\epsilon} \Big)
\]
for all $i,j$ and bandwidth parameter $\epsilon$, and use it to construct an estimate $\hat{\Delta}_\epsilon = (\text{I} - K_\epsilon)/\epsilon$ for $\Delta$.
In other words, $\hat{\Delta}_\epsilon$ is an $n \times n$ matrix such that, if $f \in C^3(\M)$ and $\hat{f}$ is the vector $\hat{f}_i = f(p_i)$, then, up to rescaling $\hat{\Delta}_\epsilon$ by a constant positive factor, $(\hat{\Delta}_\epsilon\hat{f})_i \approx \Delta(f)(p_i)$.
The use of the heat kernel means that this method is very robust to noise.
We will specifically use \textit{variable bandwidth diffusion kernels} 
\cite{berry2016variable} of the form
\[
K_\epsilon(p_i,p_j) = \exp\Big(- \frac{\|p_i - p_j\|^2}{\epsilon \rho(x)\rho(y)} \Big),
\]
where $\rho$ is an automatically defined bandwidth function, which further improve performance and generalise the method to the non-compact case.
We will use the estimate $\hat{\Delta}_\epsilon$ given in \cite{berry2016variable}, where the authors show the following convergence result.
\\
\begin{theorem}[From Corollary 1, \protect\cite{berry2016variable}]
\label{berry convergence result}
Let $q \in L^1(\M) \cap C^3(\M)$ be a density that is bounded above on $\M$ and let $X$ be sampled independently with distribution $q$. If $f \in L^2(\M,q) \cap C^3(\M)$ is a smooth function, $\hat{f}$ is the vector $\hat{f}_i = f(p_i)$, and $p_i \in X$ is an arbitrary point then, with high probability,
\[
(\hat{\Delta}_\epsilon\hat{f})_i = \Delta(f)(p_i) + \mathcal{O}\Big( \epsilon, \frac{q(p_i)^{1/2 + d/4}}{\sqrt{n}\epsilon^{2 + d/4}}, \frac{\|\nabla_{p_i} f\| q(p_i)^{d^2/2 - 5d/4 + 1} }{\sqrt{n}\epsilon^{1/2 + d/4}} \Big)
\]
up to rescaling $\hat{\Delta}_\epsilon$ by a constant positive factor $c$.
\end{theorem}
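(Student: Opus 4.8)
Since the statement is quoted verbatim as Corollary~1 of \cite{berry2016variable}, strictly the \q{proof} is a pointer to that reference; the following sketches the argument one would give. The plan is the standard bias--variance decomposition for kernel estimators of the Laplacian. Writing $(\hat{\Delta}_\epsilon \hat{f})_i$ as $\epsilon^{-1}\big( \hat{f}_i - (\hat{K}_\epsilon \hat{f})_i\big)$, where $\hat{K}_\epsilon$ is the diffusion-maps normalisation of the variable-bandwidth kernel, I would first pass to the population operator obtained by replacing the sample average over $X$ by the integral against $q\,dV$, and then bound $(\hat{\Delta}_\epsilon \hat f)_i - \Delta(f)(p_i)$ by the sum of (i) the deterministic bias of that population operator and (ii) the Monte Carlo fluctuation of the sample average around its expectation.

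For the bias term I would fix $x = p_i$, change to geodesic normal coordinates at $x$, and carry out a Laplace-type asymptotic expansion in $\epsilon$ of $\int K_\epsilon(x,y) f(y) q(y)\, dV(y)$. Taylor-expanding $f$, $q$, the bandwidth function $\rho$, and the Riemannian volume element (the latter contributing curvature terms) shows that the leading order is a positive constant times $f(x)$, while the order-$\epsilon$ term is a linear combination of $\Delta f$, $\nabla f \cdot \nabla(\log q)$, $\nabla f \cdot \nabla(\log \rho)$ and zeroth-order terms in $f$. The right- and left-normalisations built into the definition of $\hat{K}_\epsilon$ are chosen precisely to annihilate the unwanted first-order contributions, leaving $c\,\Delta f$ at order $\epsilon$; this is what produces the first error term $\mathcal{O}(\epsilon)$ and, after the particular choice of $\rho$, what extends the convergence to non-compact $\M$.

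For the variance term, $(\hat{K}_\epsilon \hat f)_i$ is a ratio of averages of i.i.d.\ bounded random variables (the normalised kernel evaluations $K_\epsilon(p_i,p_j)$ and $K_\epsilon(p_i,p_j) f(p_j)$), so I would apply a Bernstein/Chernoff concentration inequality to numerator and denominator separately and propagate the bounds through the quotient. Because the kernel concentrates on the scale $\sqrt{\epsilon\,\rho(x)\rho(y)}$, its second moment scales like a density-dependent prefactor times $\epsilon^{d/2}$; dividing by the (also random) normalisation and by the extra factor of $\epsilon$ coming from $\hat{\Delta}_\epsilon$ yields fluctuations of the stated orders, with the powers of $q(p_i)$ arising from the self-tuning choice of $\rho$ and from the local sampling density, and the two separate terms tracking the fluctuations of $\Delta f$ and of $\nabla f$ respectively.

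The main obstacle is this variance estimate --- specifically, tracking the exact exponents of $\epsilon$ and of $q(p_i)$ through the \emph{variable} bandwidth. For a constant bandwidth this is the classical diffusion-maps analysis, but here $\rho$ itself is built from a kernel density estimate of $q$, so one must additionally control the estimation error in $\rho$ and verify that it does not dominate, and one must fix the exponent $\beta$ in $\rho \propto q^{-\beta}$ so that the effective number of neighbours is roughly uniform across $\M$. Getting this bookkeeping right --- rather than any single conceptual leap --- is where the work lies, and it is carried out in full in \cite{berry2016variable}.
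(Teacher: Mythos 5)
The paper gives no proof of this theorem: it is imported verbatim from Corollary~1 of the cited reference, and you correctly identify that the ``proof'' is the citation itself. Your bias--variance sketch is a faithful outline of how that reference actually argues (kernel expansion in normal coordinates for the bias, concentration for the sampling error, with the bookkeeping for the variable bandwidth $\rho$), so there is nothing to fault here.
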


Crucially, this estimate is normalised to be independent of the sampling density $q$, so we accurately recover $\Delta$ even when the data are not uniformly sampled from $\M$.
There is a straightforward procedure for automatically selecting $\epsilon$,  which we adopt from \cite{coifman2008graph}, so obtain a parameter-free estimate $\hat{\Delta}$.

This result is notable when the data does not actually lie on a manifold.
Manifolds are locally homeomorphic to Euclidean space everywhere, and the dimension of that Euclidean space must be constant everywhere.
Real data, on the other hand, may only be locally homeomorphic to Euclidean space in some areas, and its dimension might jump around.
The pointwise convergence of Theorem \ref{berry convergence result} ensures that, in this (more realistic) situation, the dimension, tangent space, and curvature estimates defined here will still make sense locally if not globally.

We can use $\hat{\Delta}$ to estimate the carré du champ by plugging it into the formula (\ref{cdc formula}).
If we let $\Gamma(f,h) = g(\nabla f, \nabla h)$ denote the \q{carré du champ operator}, we can estimate
\[
\hat{\Gamma}(\hat{f},\hat{h}) = \frac{1}{2}\big(\hat{f}\hat{\Delta}\hat{h} + \hat{h} \hat{\Delta}\hat{f} - \hat{\Delta}(\hat{f}\hat{h}) \big) \approx \Gamma(f,h).
\]
We can apply Theorem \ref{berry convergence result} to obtain a nearly identical pointwise convergence result for $\hat{\Gamma}$.

\begin{corollary}
\label{cor: cdc convergence}
Under the assumptions of Theorem \ref{berry convergence result}, suppose also that $f,h,fh \in L^2(\M,q) \cap C^3(\M)$ and $f,h$ are bounded.
Then, with high probability,
\[
(\hat{\Gamma}(\hat{f},\hat{h}))_i = \Gamma(f,h)(p_i) + \mathcal{O}\Big( \epsilon, \frac{q(p_i)^{1/2 + d/4}}{\sqrt{n}\epsilon^{2 + d/4}}, \frac{(\|f\|_\infty + \|h\|_\infty + \|\nabla_{p_i} f\| + \|\nabla_{p_i} h\|) q(p_i)^{d^2/2 - 5d/4 + 1} }{\sqrt{n}\epsilon^{1/2 + d/4}} \Big).
\]
\end{corollary}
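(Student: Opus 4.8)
The plan is to expand the definition of $\hat\Gamma$, apply Theorem \ref{berry convergence result} separately to the three functions $f$, $h$, and $fh$, and then collect the resulting error terms, using the boundedness of $f$ and $h$ to control the cross terms. The first observation is that multiplication in $\A \cong \R^n$ is pointwise, so the vector $\hat f\hat h$ is exactly the sampling vector $\widehat{fh}$ of the function $fh$, i.e. $(\hat f\hat h)_i = f(p_i)h(p_i)$. The added hypothesis $f,h,fh \in L^2(\M,q)\cap C^3(\M)$ is therefore precisely what is needed to feed each of $f$, $h$, $fh$ into Theorem \ref{berry convergence result}, using the \emph{same} matrix $\hat\Delta_\epsilon$ and hence the same positive rescaling constant $c$.

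Applying the theorem three times then gives, with high probability (a union bound over the three events keeps us in the high-probability regime),
\[
(\hat\Delta_\epsilon\hat f)_i = \Delta f(p_i) + E_f, \quad (\hat\Delta_\epsilon\hat h)_i = \Delta h(p_i) + E_h, \quad (\hat\Delta_\epsilon\widehat{fh})_i = \Delta(fh)(p_i) + E_{fh},
\]
up to rescaling by $c$, where each $E_g$ has the three-part form $\mathcal{O}(\epsilon,\, q(p_i)^{1/2+d/4}/(\sqrt n\epsilon^{2+d/4}),\, \|\nabla_{p_i}g\|\, q(p_i)^{d^2/2-5d/4+1}/(\sqrt n\epsilon^{1/2+d/4}))$. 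The first two components are identical for all three functions; only the third depends on $g$, through $\|\nabla_{p_i}g\|$.

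Next I would substitute these into $\hat\Gamma(\hat f,\hat h)_i = \tfrac12\big(\hat f_i(\hat\Delta_\epsilon\hat h)_i + \hat h_i(\hat\Delta_\epsilon\hat f)_i - (\hat\Delta_\epsilon\widehat{fh})_i\big)$. The leading terms reassemble, via the carré du champ formula (\ref{cdc formula}), into $\Gamma(f,h)(p_i)$ (up to the same rescaling $c$ as in Theorem \ref{berry convergence result}), leaving the error $\tfrac12\big(f(p_i)E_h + h(p_i)E_f - E_{fh}\big)$. Since $|f(p_i)|\le\|f\|_\infty$ and $|h(p_i)|\le\|h\|_\infty$, pre-multiplying $E_h$ and $E_f$ by these bounded quantities does not change their asymptotic order, so the $\mathcal{O}(\epsilon)$ and $\mathcal{O}(q(p_i)^{1/2+d/4}/(\sqrt n\epsilon^{2+d/4}))$ contributions from all three error terms combine into terms of the same two orders. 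For the gradient-dependent part, the product rule gives $\|\nabla_{p_i}(fh)\| \le \|f\|_\infty\|\nabla_{p_i}h\| + \|h\|_\infty\|\nabla_{p_i}f\|$, again using boundedness; collecting the three gradient terms and absorbing constants produces the factor $(\|f\|_\infty + \|h\|_\infty + \|\nabla_{p_i}f\| + \|\nabla_{p_i}h\|)$ multiplying $q(p_i)^{d^2/2-5d/4+1}/(\sqrt n\epsilon^{1/2+d/4})$, which is the stated bound.

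There is no genuine obstacle here; the work is bookkeeping. The points that require a little care are: ensuring the single rescaling constant $c$ is used consistently across all three invocations of Theorem \ref{berry convergence result} (it is, as they share $\hat\Delta_\epsilon$); recognising that it is exactly the product-rule bound on $\|\nabla_{p_i}(fh)\|$ that forces the $L^\infty$ norms of $f$ and $h$ into the error estimate, hence explaining the extra boundedness hypothesis; and checking that multiplying a big-$\mathcal{O}$ quantity by a bounded factor leaves its order intact. Everything else is linear algebra in $\R^n$ and routine collection of terms.
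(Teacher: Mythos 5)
Your proposal is correct and follows essentially the same route as the paper's own proof: apply Theorem \ref{berry convergence result} to the three terms $\hat{\Delta}\hat{f}$, $\hat{\Delta}\hat{h}$, and $\hat{\Delta}(\hat{f}\hat{h})$, then use the boundedness of $f,h$ and the product-rule bound $\|\nabla(fh)\| \leq \|f\|_\infty\|\nabla h\| + \|h\|_\infty\|\nabla f\|$ to collect the error terms into the stated order. The extra bookkeeping you supply (the union bound over the three events and the consistency of the rescaling constant $c$) is implicit in the paper's one-line argument and is handled correctly.
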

\begin{proof}
Since $\hat{\Gamma}(\hat{f},\hat{h}) = \frac{1}{2}(\hat{f}\hat{\Delta}\hat{h} + \hat{h} \hat{\Delta}\hat{f} - \hat{\Delta}(\hat{f}\hat{h}))$, the result follows by applying Theorem \ref{berry convergence result} to the three terms $\hat{\Delta}\hat{f}$, $\hat{\Delta}\hat{h}$, and $\hat{\Delta}(\hat{f}\hat{h})$.
The error is of the given order because $f,h$ are bounded and $\| \nabla(fh) \| \leq \|f\|_\infty \|\nabla h\| + \|h\|_\infty \|\nabla f\|$.
\end{proof}

\begin{remark}
Diffusion maps methods, especially in the variable bandwidth case, can result in estimators of the form
\[
\Delta f + b \frac{\nabla f \cdot\nabla q}{q}
\]
where $q$ is the density and $b$ is a constant possibly depending on the (usually unknown) dimension.
In our case, this additional first-order term will not matter, because the carré du champ measures the failure of the Leibniz rule and so any first-order terms will, by definition, cancel out.
\end{remark}

\begin{remark}
While we have followed the standard diffusion maps approach to heat-kernel construction, this computation is possible with arbitrary or abstract graph Laplacians.
\end{remark}

\subsection{Metric and dimension}

We can use the estimate for the carré du champ to compute the dimension $d$ of $\M$ and tangent spaces $T_p\M$.
Notice that the gradients of the ambient coordinate functions $\{ \nabla_p x_i : i = 1,...,D \}$ will span $T_p\M$ at each $p \in \M$.
The Riemannian metric gives an inner product in $T_p\M$, and we can form the following $D \times D$ Gram matrix $G(p)$ of inner products for the gradients of the coordinate functions:
\[
G(p) =
\begin{pmatrix}
g(\nabla_p x_1, \nabla_p x_1) & \dots & g(\nabla_p x_1, \nabla_p x_D) \\
\dots && \dots \\
g(\nabla_p x_D, \nabla_p x_1) & \dots & g(\nabla_p x_D, \nabla_p x_D) \\
\end{pmatrix}
=
\begin{pmatrix}
\Gamma(x_1,x_1) & \dots & \Gamma(x_1,x_D) \\
\dots && \dots \\
\Gamma(x_D,x_1) & \dots & \Gamma(x_D,x_D) \\
\end{pmatrix}
\]

This matrix is symmetric and has rank $d$ at every $p$ (since $T_p\M$ is $d$-dimensional), so has $d$ positive eigenvalues whose eigenvectors form an orthonormal basis for $T_p\M$.
The fact that $\M$ is \textit{isometrically} embedded means that those positive eigenvalues will all be 1.
The other $D-d$ eigenvectors (with eigenvalue 0) form an orthonormal basis for the \q{normal space} of vectors orthogonal to $T_p\M$.

\subsubsection{Tangent space estimation}
\label{Tangent space estimation}

We can use our estimate $\hat{\Gamma}$ for the carré du champ $\Gamma$ to form an estimate $\hat{G}(p_i)$ of $G(p_i)$, i.e. we compute a $D\times D$ matrix $\hat{G}(p_i)$.
If we then diagonalise $\hat{G}(p_i)$, its $D$ eigenvectors will form an orthonormal basis for $\R^D$ and the first $d$ of those should form an orthonormal basis for $T_{p_i}\M$.
The entries of $\hat{G}(p_i)$ converge to their true values as $n \to \infty$ by Corollary \ref{cor: cdc convergence}, so its eigenvalues and eigenvectors also converge appropriately.

This incurs a complexity of $\mathcal{O}(D^3)$ to diagonalise each $G(p_i)$, leading to $\mathcal{O}(nD^3)$ complexity overall, although this computation parallelises trivially over $n$.
If $d$ is known in advance (or at least an upper bound is known), then this cost could be reduced using randomised eigensolvers that approximate just the dominant eigenspaces.

\subsubsection{Normalisation}
\label{Normalisation}

Recall that the \textit{diffusion maps} Laplacian $\hat{\Delta}$ gives an estimate for $\Delta$ up to rescaling by a constant positive factor $c$, which will also lead to $\hat{\Gamma}$ and $\hat{G}$ being rescaled by $c$.
We can correct this using the fact that the first $d$ eigenvalues $\lambda_1^i,..., \lambda_d^i$ of the matrix $G(x_i)$ should be 1, but the eigenvalues $\hat{\lambda}_1^i,..., \hat{\lambda}_d^i$ we estimate from $\hat{G}$ are rescaled to $c$.
So we can estimate $c$ by computing the mean eigenvalue at each point $p_i$ and then taking the median over $i$,
\[
\hat{c} = \text{median} \Big\{ \frac{1}{d} \sum_{j=1}^d \hat{\lambda_j^i} : i = 1,...,n \Big\}.
\]
We then rescale $\hat{\Delta}$ and $\hat{\Gamma}$ by $1/\hat{c}$.

\subsubsection{Dimension estimation}
\label{Dimension estimation}

If we do not know $d$ in advance, we can estimate it from the eigenvalues of the $\hat{G}(p_i)$ given that, if $\hat{\lambda}_\ell^i$ is the $\ell\thupper$ largest eigenvalue of $\hat{G}(p_i)$, we expect
\[
\hat{\lambda}_\ell^i \approx
\begin{cases}
c & \ell\leq d \\
0 & \ell > d
\end{cases}
\]
for all $i$.
We cannot estimate $c$ as described in \ref{Normalisation} without knowing $d$, so will now just assume that $c=1$, which seems perfectly sufficient in practice.
If we form the vector of differences
\[
D_i = (1 - \hat{\lambda}_1^i, \hat{\lambda}_1^i - \hat{\lambda}_2^i, \hat{\lambda}_2^i - \hat{\lambda}_3^i, ..., \hat{\lambda}_{D-1}^i - \hat{\lambda}_D^i, \hat{\lambda}_D^i)
\]
then $D_i$ should be 1 in the $d\thupper$ entry and zero elsewhere, so we can estimate the dimension at the point $p_i$ to be
\begin{equation}\label{def: dim estimate}
\hat{d}(i) = \text{argmax } D_i.
\end{equation}
Notice that the $c=1$ assumption only affects $\hat{d}$ where the data are 0-dimensional.
When the data lies on a manifold, $\hat{d}(i)$ should equal $d$ everywhere.
More generally, given that $\hat{d}(i)$ is derived from the heat diffusion operator, we can think of it as measuring the number of independent directions through which heat can flow along the data through $x_i$.
We can obtain a global dimension estimate $\hat{d}$ from the pointwise one $\hat{d}(i)$ by taking the median over all $i$.

\subsection{Curvature}
\label{section: curvature estimators}

We have now obtained estimates for the tangent bundle, normal bundle, and carré du champ, and can use these to compute the curvature.
To ease the notation, we will describe the computation at a given point $p=p_i$, and drop the index where possible.

The eigenvectors of $\hat{G}(p)$ form an orthonormal basis for $\R^D$: the first $d$ approximate an orthonormal basis $\nabla_px_1,...,\nabla_px_d$ for $T_p\M$, and the last $D-d$ approximate normal vectors $\vec{n}^1,...,\vec{n}^{D-d}$.
To compute the curvature at $p$, we would like to compute the Hessian terms $\alpha^\ell_{ij} = H_p(n^\ell)(\nabla_px_i,\nabla_px_j)$ for each normal vector $n^\ell$ and all pairs of tangent vectors $\nabla_p x_i, \nabla_p x_j$.
We can write $\alpha^\ell_{ij}$ in terms of the carré du champ, via the formula\footnote{This formula is well-known in the theory of Markov diffusion operators: see \cite{bakry2014analysis} for a survey and Proposition 9.1. in \cite{jones2024diffusion} for a derivation.}
\[
H(f)(\nabla h_1, \nabla h_2) = \frac{1}{2} \big( \Gamma(h_1, \Gamma(h_2, f)) + \Gamma(h_2, \Gamma(h_1, f)) - \Gamma(f, \Gamma(h_1, h_2)) \big),
\]
and so use $\hat{\Gamma}$ to compute estimates $\hat{\alpha}^\ell_{ij}$ for $\alpha^\ell_{ij}$.

We then get estimators for the Riemann
\[
\hat{R}_{ijkl} = \sum_{\ell=1}^{D-d} (\hat{\alpha}^\ell_{ik}\hat{\alpha}^\ell_{jl} - \hat{\alpha}^\ell_{jk}\hat{\alpha}^\ell_{il}),
\]
Ricci
\[
\hat{\text{Ric}}_{ij} = \sum_{\ell=1}^{D-d} \sum_{k=1}^d (\hat{\alpha}^\ell_{kk}\hat{\alpha}^\ell_{ij} - \hat{\alpha}^\ell_{ik}\hat{\alpha}^\ell_{jk}),
\]
and scalar curvatures
\[
\hat{S} = \sum_{\ell=1}^{D-d} \sum_{i,j=1}^d (\hat{\alpha}^\ell_{ii}\hat{\alpha}^\ell_{jj} - (\hat{\alpha}^\ell_{ij})^2).
\]

The Hessian estimates $\hat{\alpha}^\ell_{ij}$ involve the squared $\hat{\Delta}^2$, which is not directly covered by Theorem \ref{berry convergence result}.
A suitable convergence result could likely be obtained by similar means (obtaining curvature guarantees such as \cite{aamari2019nonasymptotic}), although that is beyond the scope of this work.

Having diagonalised the Gram matrices $G(p_i)$ in $\mathcal{O}(nD^3)$, the estimates $\hat{\alpha}^\ell_{ij}$ do not require further eigendecomposition, but have a cost of $\mathcal{O}(nd^2(D-d))$, which in the worst case is also $\mathcal{O}(nD^3)$.

\section{Dimension estimation}
\label{sec: dimension estimation}

\subsection{Pointwise dimension}

We defined an estimate $\hat{d}(i)$ at a point $x_i$ in equation (\ref{def: dim estimate}) as the rank of the metric, or, more generally, as the number of independent directions that heat can flow through $x_i$.
This \textit{diffusion dimension} varies across the data and measures the effective dimension of the space at every point.
When the data are drawn from a manifold $\M$ without boundary, $\hat{d}(i)$ should be constant.
When the data lie on manifolds with boundaries and corners, or on the union of several manifolds (which are examples of \textit{stratified spaces}), $\hat{d}(i)$ can vary and encode interesting local geometry, which we see in Figure \ref{fig:pointwise dim}. 

\begin{figure}[h!]
    \centering
    \includegraphics[width=1\linewidth]{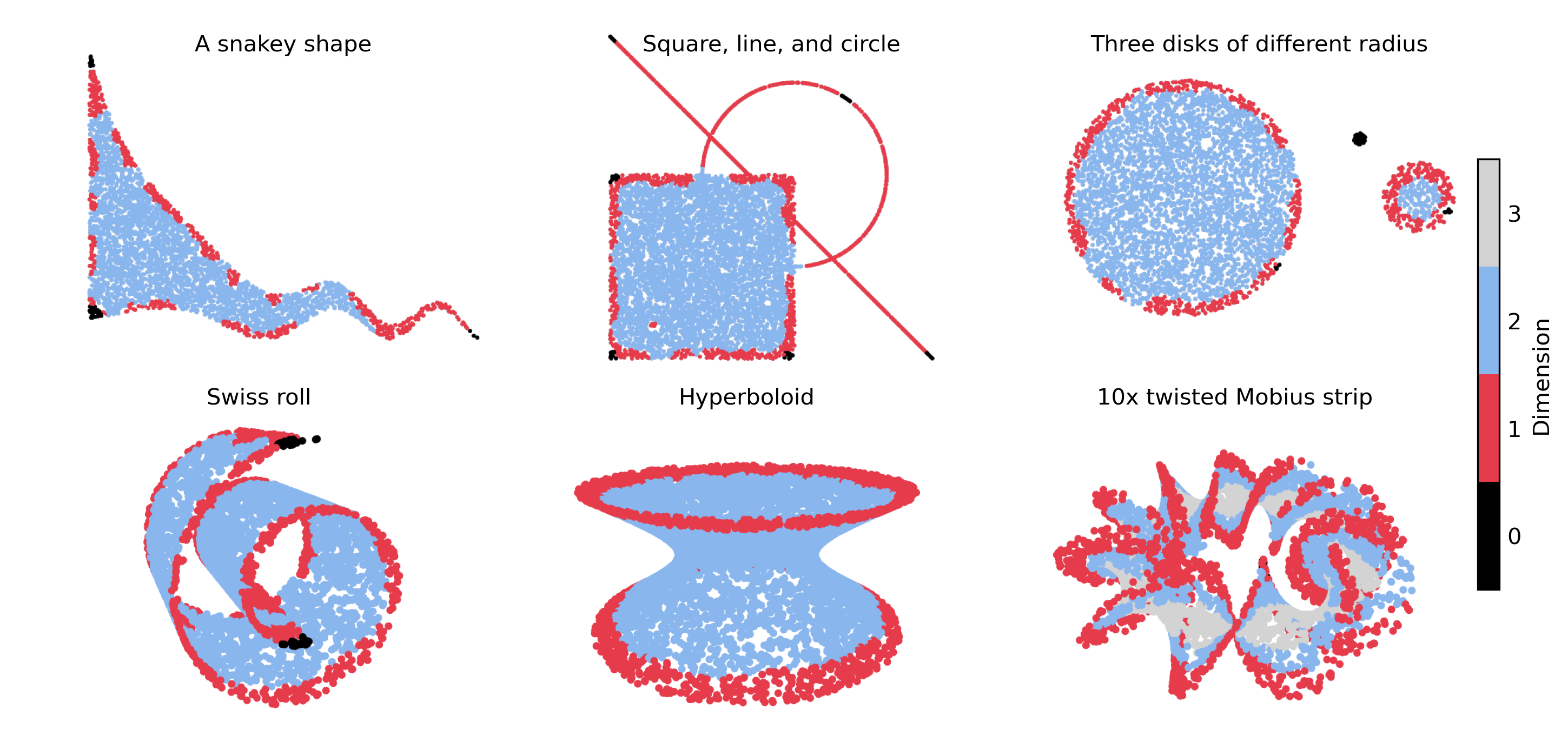}
    \captionsetup{width=0.8\linewidth}
    \caption{\textbf{Pointwise diffusion dimension.} The dimension is estimated at each point for data in 2d (top row) and 3d (bottom row). This process generally identifies 1-dimensional boundaries as 1-dimensional, and hard corners as 0-dimensional. These examples are given without noise for clarity, although this process is very robust to noise.}
    \label{fig:pointwise dim}
\end{figure}

\subsection{Global dimension}

The problem of manifold dimension estimation is very well-studied, and we can define a \textit{global} dimension estimate $\hat{d}$ as the median of $\hat{d}(i)$ across the data.
To test the accuracy of $\hat{d}$, we sample data from the 1,2, and 3-dimensional manifolds in the challenging dimension estimation benchmark proposed in \cite{campadelli2015intrinsic}.
We add to this benchmark the 1, 2, and 3-dimensional hyperspheres and tori, as well as the 2-dimensional hyperboloid, embedded in a variety of ambient dimensions\footnote{The results on the benchmark of \cite{campadelli2015intrinsic} alone are in the Appendix, and are qualitatively very similar.}.
This results in 12 test manifolds, including deliberately difficult examples with high curvature, such as the 10-times twisted Möbius strip in Figure \ref{fig:pointwise dim}, and tightly-wound helices.
We focus on at most 3-dimensional data for the reasons outlined in the introduction.
For each manifold, we sample $n$ data where either $n = n_{small}$ or $n = n_{large}$, where $n_{small}$ and $n_{large}$ vary with dimension\footnote{When $d=1$, $n_{small} = 600$ and $n_{large} = 1200$, when $d=2$, $n_{small} = 1200$ and $n_{large} = 2400$, and when $d=3$, $n_{small} = 2400$ and $n_{large} = 4800$.}.
We also add normally distributed noise with standard deviation 0, $0.5 \sigma_{max}$, or $\sigma_{max}$, where $\sigma_{max}$ is defined separately for each manifold in the benchmark as the largest value such that at least one method can attain at least $50\%$ accuracy with that much noise.

We test the diffusion geometry estimate $\hat{d}$ on this challenging data, against eleven standard and state-of-the-art methods, 
Correlation Dimension \cite{grassberger1983measuring}, 
Dimensionality from Angle and Norm Concentration (DANCo) \cite{ceruti2012danco}, 
Expected Simplex Skewness (ESS) \cite{johnsson2014low}, 
Fisher Separability \cite{albergante2019estimating}, 
local principal component analysis (LPCA) \cite{cangelosi2007component, fan2010intrinsic, fukunaga1971algorithm},
Manifold-Adaptive Dimension Estimation (MADA) \cite{farahmand2007manifold}, 
Minimum Neighbor Distance - Maximum Likelihood (MiND-ML) \cite{rozza2012novel}, 
Maximum Likelihood Estimation (MLE) \cite{haro2008translated, hill1975simple, levina2004maximum}, 
Method of Moments (MOM) \cite{amsaleg2018extreme}, 
Tight Local intrinsic dimensionality Estimator (TLE) \cite{amsaleg2019intrinsic}, 
and Two Nearest Neighbours (TwoNN) \cite{facco2017estimating}.
Some of these methods return non-integer dimension estimates, and in these cases we round to the nearest integer.
Most of these methods also have parameters to be set, but, to fairly test their real-world performance, these parameters are all set to their default values, as implemented in \cite{bac2021scikit}.
We measure the average accuracy of each method across 20 samples of each of the 12 manifolds in the benchmark, for the different numbers of data $n$ and levels of noise $\sigma$, and record the results in Table \ref{tab:benchmark-comparison}.

\setlength{\tabcolsep}{3pt}
\begin{table}[h!]
\centering
\resizebox{\textwidth}{!}{%
\begin{tabular}{l|ccc|ccc}
&& $n = n_{small}$ &&& $n = n_{large}$ & \\
\textbf{Method} & $\sigma = 0$ & $\sigma = 0.5 \sigma_{max}$ & $\sigma = \sigma_{max}$ & $\sigma = 0$ & $\sigma = 0.5 \sigma_{max}$ & $\sigma = \sigma_{max}$ \\
\midrule
Correlation Dimension \cite{grassberger1983measuring}
& 
$\color[HTML]{e53b4a}\mathbf{100.0\pm0.0}$ & $\color[HTML]{e53b4a}\mathbf{87.1\pm6.6}$ & $21.2\pm4.1$ & $91.7\pm0.0$ & $35.8\pm7.5$ & $5.8\pm3.8$ \\
MADA \cite{farahmand2007manifold}
& $91.7\pm0.0$ & $32.9\pm11.5$ & $0.0\pm0.0$ & $91.7\pm0.0$ & $8.3\pm0.0$ & $0.0\pm0.0$ \\
LPCA \cite{cangelosi2007component, fan2010intrinsic, fukunaga1971algorithm}
& $8.3\pm0.0$ & $8.3\pm0.0$ & $0.0\pm0.0$ & $8.3\pm0.0$ & $8.3\pm0.0$ & $0.0\pm0.0$ \\
MLE \cite{haro2008translated, hill1975simple, levina2004maximum}
& $91.7\pm0.0$ & $47.1\pm6.3$ & $0.0\pm0.0$ & $91.7\pm0.0$ & $17.1\pm1.8$ & $0.0\pm0.0$ \\
MiND-ML \cite{rozza2012novel}
& $\color[HTML]{166dde}\mathbf{92.1\pm1.8}$ & $18.3\pm3.3$ & $0.0\pm0.0$ & $\color[HTML]{e53b4a}\mathbf{100.0\pm0.0}$  & $17.1\pm1.8$ & $0.4\pm1.8$ \\
DANCo \cite{ceruti2012danco}
& $91.7\pm0.0$ & $0.0\pm0.0$ & $0.0\pm0.0$ & $\color[HTML]{166dde}\mathbf{98.8\pm3.0}$ & $0.4\pm1.8$ & $0.0\pm0.0$ \\
TwoNN \cite{facco2017estimating}
& $\color[HTML]{e53b4a}\mathbf{100.0\pm0.0}$ & $11.2\pm5.6$ & $7.9\pm1.8$ & $\color[HTML]{e53b4a}\mathbf{100.0\pm0.0}$  & $8.3\pm0.0$ & $8.3\pm0.0$ \\
MOM \cite{amsaleg2018extreme}
& $75.0\pm0.0$ & $66.7\pm0.0$ & $\color[HTML]{166dde}\mathbf{50.0\pm0.0}$ & $83.3\pm0.0$ & $\color[HTML]{166dde}\mathbf{73.3\pm3.3}$ & $\color[HTML]{166dde}\mathbf{20.8\pm4.2}$ \\
Fisher Separability \cite{albergante2019estimating}
& $16.7\pm0.0$ & $16.7\pm0.0$ & $15.4\pm3.0$ & $16.7\pm0.0$ & $16.7\pm0.0$ & $15.0\pm3.3$ \\
TLE \cite{amsaleg2019intrinsic}
& $91.7\pm0.0$ & $0.0\pm0.0$ & $0.0\pm0.0$ & $91.7\pm0.0$ & $0.0\pm0.0$ & $0.0\pm0.0$ \\
\midrule
Diffusion Geometry  & 
$88.8\pm4.0$ & $\color[HTML]{166dde}\mathbf{83.3\pm0.0}$ & $\color[HTML]{e53b4a}\mathbf{55.0\pm7.7}$ & $90.8\pm2.5$ & $\color[HTML]{e53b4a}\mathbf{79.6\pm4.1}$ & $\color[HTML]{e53b4a}\mathbf{58.8\pm11.0}$ \\
\end{tabular}
}

\captionsetup{width=0.8\linewidth}
\caption{\protect\textbf{Manifold dimension estimation.} 
Average accuracies and standard deviations (\%) from 20 runs over 12 benchmark manifolds with dimensions 1, 2, and 3.
The standard deviations are computed for each manifold and then averaged overall, so 0 means that the method returned the same value for each manifold on every run.
We sample $n$ data randomly for a small and large value of $n$, and for zero, medium, and large amounts of noise.
The {\color[HTML]{e53b4a}\textbf{red}} and {\color[HTML]{166dde}\textbf{blue}} numbers indicate the {\color[HTML]{e53b4a}\textbf{best}} and {\color[HTML]{166dde}\textbf{second-best}} accuracies in each column.}
\label{tab:benchmark-comparison}
\end{table}

The many methods tested here all respond to noise and density in different ways: some achieve outstanding accuracy on clean data but have virtually no robustness to noise (MADA, MLE, MiND-ML, DANCo, TwoNN, TLE), and many performed especially well on one or two particular examples in the benchmark.
Overall, the best-performing methods across the test are clearly correlation dimension, method of moments (MOM), and diffusion geometry.
Correlation dimension is highly accurate on clean data and also reasonably robust, but only at the lower density $n_{small}$.
MOM is more robust, and deals better with the change in density, but is overall less accurate.
Diffusion geometry, while not the most accurate method for perfect, noiseless data, is by far the most robust to noise and density, and achieves the highest accuracy of any model in 3 of the 4 noisy examples. 

Low-dimensional data are often encountered in a very high-dimensional ambient space, but we find this does not significantly affect the performance or robustness of these methods.
Running the same test on the same data but embedded in a range of higher ambient dimensions does not change the results.

\section{Tangent space estimation}
\label{sec: tangent space estimation}

In Subsection \ref{Tangent space estimation}, we used the leading eigenvectors of the metric as an approximate basis for the tangent space $T_{x_i}\M$ of a manifold $\M$.
When the data lie on \q{manifold-like} objects such as the union of manifolds, this \q{tangent space} estimate encodes something like the directions of strongest diffusion through the space.
We compute the 1 and 2-dimensional tangent spaces of noisy data in Figure \ref{fig:tangent spaces general}.

\begin{figure}[!h]
    \centering
    \includegraphics[width=1\linewidth]{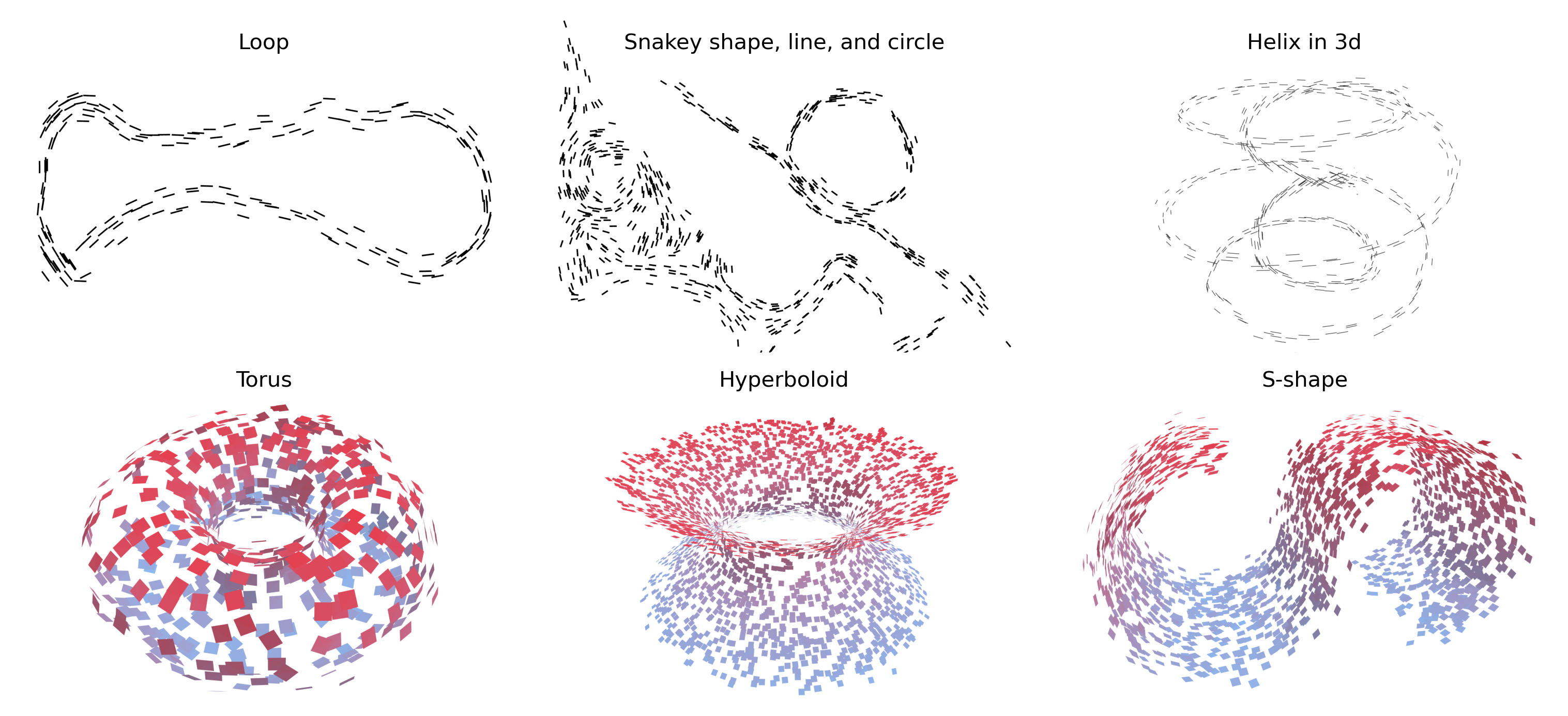}
    \captionsetup{width=0.8\linewidth}
    \caption{\textbf{Tangent spaces of data.} We compute the 1d (top row) and 2d (bottom row) tangent space for each point.
    When the data are from a manifold, we robustly recover the tangent bundle even with large amounts of noise.
    When the data is not a manifold (top centre example), these \q{diffusion tangents} measure the direction of greatest heat flow along the object.}
    \label{fig:tangent spaces general}
\end{figure}

The standard technique for this task is local principal component analysis (LPCA) \cite{aamari2019nonasymptotic}, which, for some given parameter $k$, computes the covariance matrix of the $k$-nearest neighbours to each point $x_i$, and defines the tangent space to be its leading $d$ eigenvectors.
In other words, it performs principal component analysis in a neighbourhood of each point, so follows the \textit{hard neighbourhood paradigm}.
LPCA can be effective in many situations but depends sensitively on the choice of neighbourhood size $k$, whose optimal value depends on the amount of noise, density of the sample, and geometry of the underlying manifold.
When $k$ is small, LPCA can accurately compute the tangent space for clean data but is extremely sensitive to noise.
When $k$ is large, LPCA becomes much more noise-robust but is only accurate on very dense samples.
This trade-off is inherent to \textit{hard neighbourhood} methods.

Conversely, the diffusion geometry tangent space estimator does not need parameter selection and is highly robust to both noise and density simultaneously.
To compare the two methods, we sample data from the torus in Figure \ref{fig:tangent spaces general} with different densities and noise levels.
We estimate the tangent space with diffusion geometry and LPCA with two neighbourhood parameters: $k=5$ and $k=100$.
To test the accuracy of an estimate at point $x_i$, we compute the normal vector to the estimated tangent plane, and compare it with the correct normal to the torus at $x_i$ (or at the point on the torus closest to $x_i$, when the data contains noise).
We then compute the angle between these two normal vectors and average it over the data: when the average is 0\textdegree\ the estimator is perfectly accurate, and when the average is over 45\textdegree\ the tangent spaces are essentially random.
We visualise the results in Figure \ref{fig:LPCA comparison}.

\begin{figure}[!h]
    \centering
    \includegraphics[width=\linewidth]{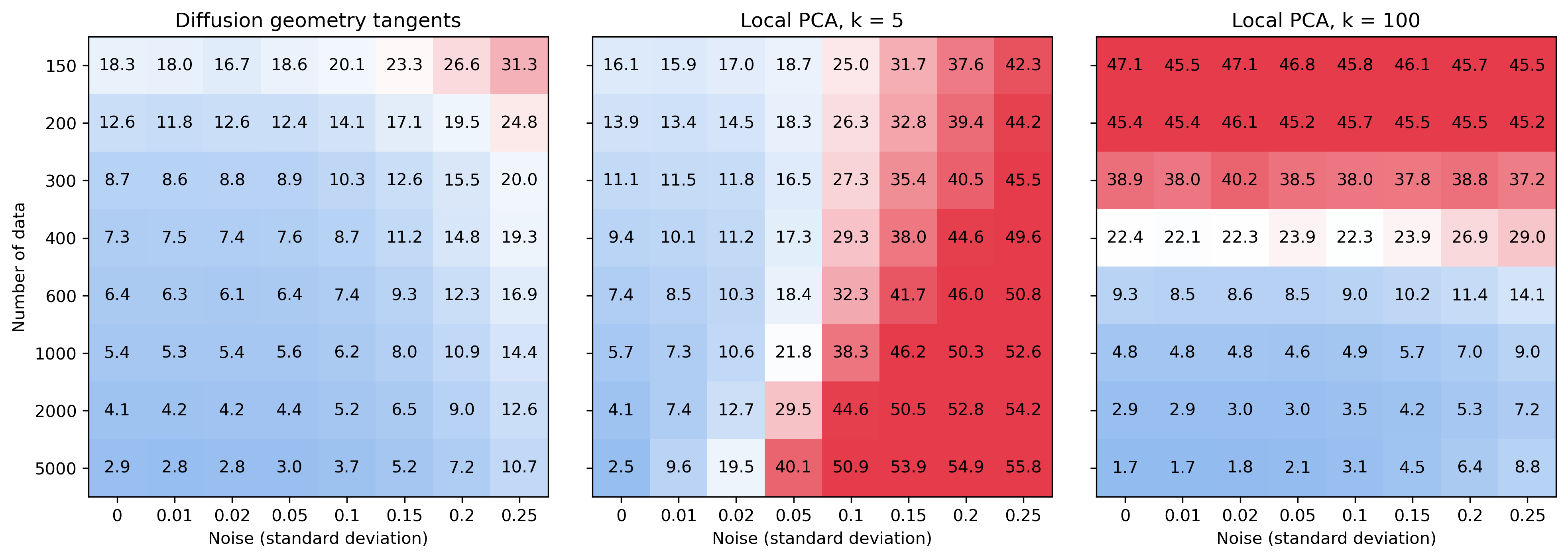}
    \captionsetup{width=0.8\linewidth}
    \caption{\textbf{Diffusion Geometry vs Local PCA.}
    We compute the tangent space for a torus in $\R^3$ with different sampling densities and noise levels, using tangent diffusion and LPCA.
    We measure accuracy at a point by finding the \q{error angle} between the normal vector to the computed tangent space and the true normal at that point.
    The grids contain the average error angle in degrees, averaged over 10 runs: 0\textdegree\ means perfect accuracy over the whole torus and over 45\textdegree\ means the tangent spaces are random.
    LPCA is computed for $k=5$ and $k=100$ nearest neighbours: both values perform well in places but no fixed value of $k$ is always good.
    Diffusion geometry is comparable to or better than LPCA and does not need parameter selection.}
    \label{fig:LPCA comparison}
\end{figure}

We find that, while LPCA can always attain very high accuracy with near-perfect data, no fixed value of $k$ is robust to both noise and density.
Conversely, the diffusion geometry tangents attain comparable accuracy for high-density and low-noise data but can maintain good accuracy even when the data is low-density and highly noisy.
In particular, diffusion geometry outperforms LPCA with either parameter when $n < 1000$ and especially when $\sigma > 0.02$.
The torus data in Figure \ref{fig:tangent spaces general} has $n=600$ and $\sigma = 0.1$ so is right on this boundary: it is clearly a torus to the human eye but is hard for LPCA.
For geometric analysis of real data, it is especially important to perform well on exactly this sort of low-quality, sparse and noisy data.

\section{Curvature estimation}
\label{sec: curvature estimation}

\subsection{Scalar curvature}

In Subsection \ref{section: curvature estimators}, we introduced estimators for the Riemann, Ricci, and scalar curvatures using the second fundamental form defined by the Hessian.
These give us a powerful description of the local geometry of manifolds.
We give several examples of the scalar curvature of a surface in Figure \ref{fig:scalar general examples}.

\begin{figure}[!h]
    \centering
    \includegraphics[width=1\linewidth]{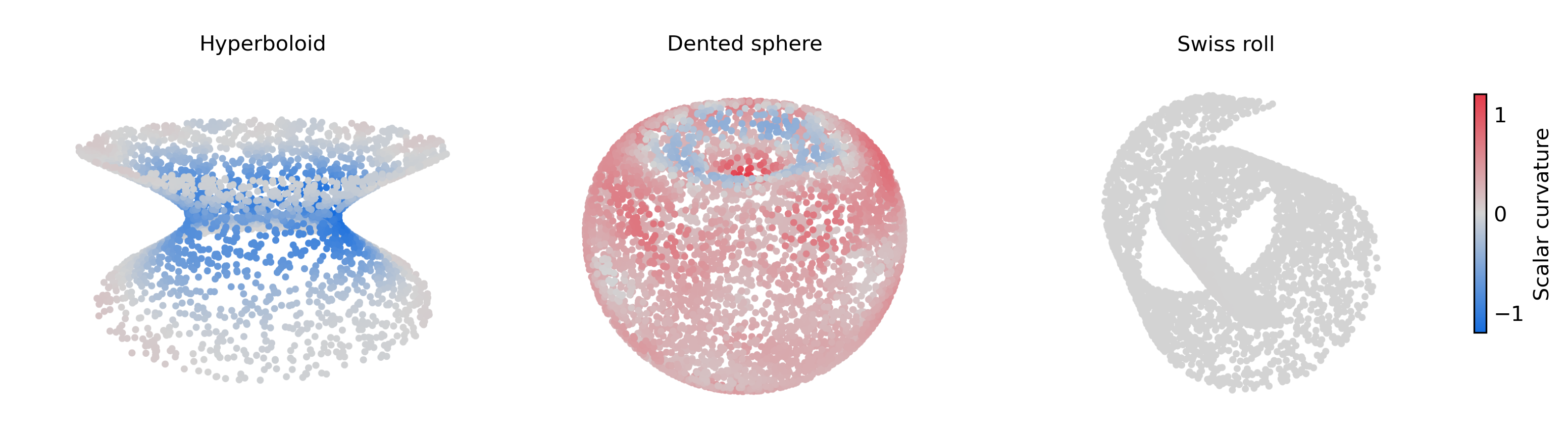}
    \captionsetup{width=0.8\linewidth}
    \caption{\textbf{Scalar curvature of surfaces.} 
    We compute the scalar curvature with diffusion geometry.
    When the surface is locally spherical, the curvature is positive (e.g. on most of the dented sphere).
    When the surface is locally hyperbolic the curvature is negative (e.g. the hyperboloid, which is negatively curved everywhere, and the \q{rim} of the dent on the sphere).
    Zero curvature means the surface looks like Euclidean space, such as on the Swiss roll, which is a wrapped but, crucially, not deformed rectangle.}
    \label{fig:scalar general examples}
\end{figure}

We test our estimate on challenging, low-quality data by sampling points from a torus in $\R^3$

\[
\{\big((R + r \cos(\phi))\cos(\theta), (R + r \cos(\phi))\sin(\theta), r \sin(\theta)\big) : \phi, \theta \in [0, 2\pi) \}
\]

(we choose $r=1$ and $R=2$) and compare the estimated scalar curvature to the correct value
\begin{equation}
\label{eq: torus scalar ground truth}
S(\theta, \phi) = \frac{2 \cos(\phi)}{r(R + r\cos(\phi))}.
\end{equation}
Notice that $S$ depends only on the internal angle $\phi$, so in Figure \ref{fig:scalar torus ground truth} we plot our estimate against \q{ground truth} as functions of $\phi$.
Even with noisy and low-density data, our estimator is very accurate.

\begin{figure}[!h]
    \centering
    \includegraphics[width=1\linewidth]{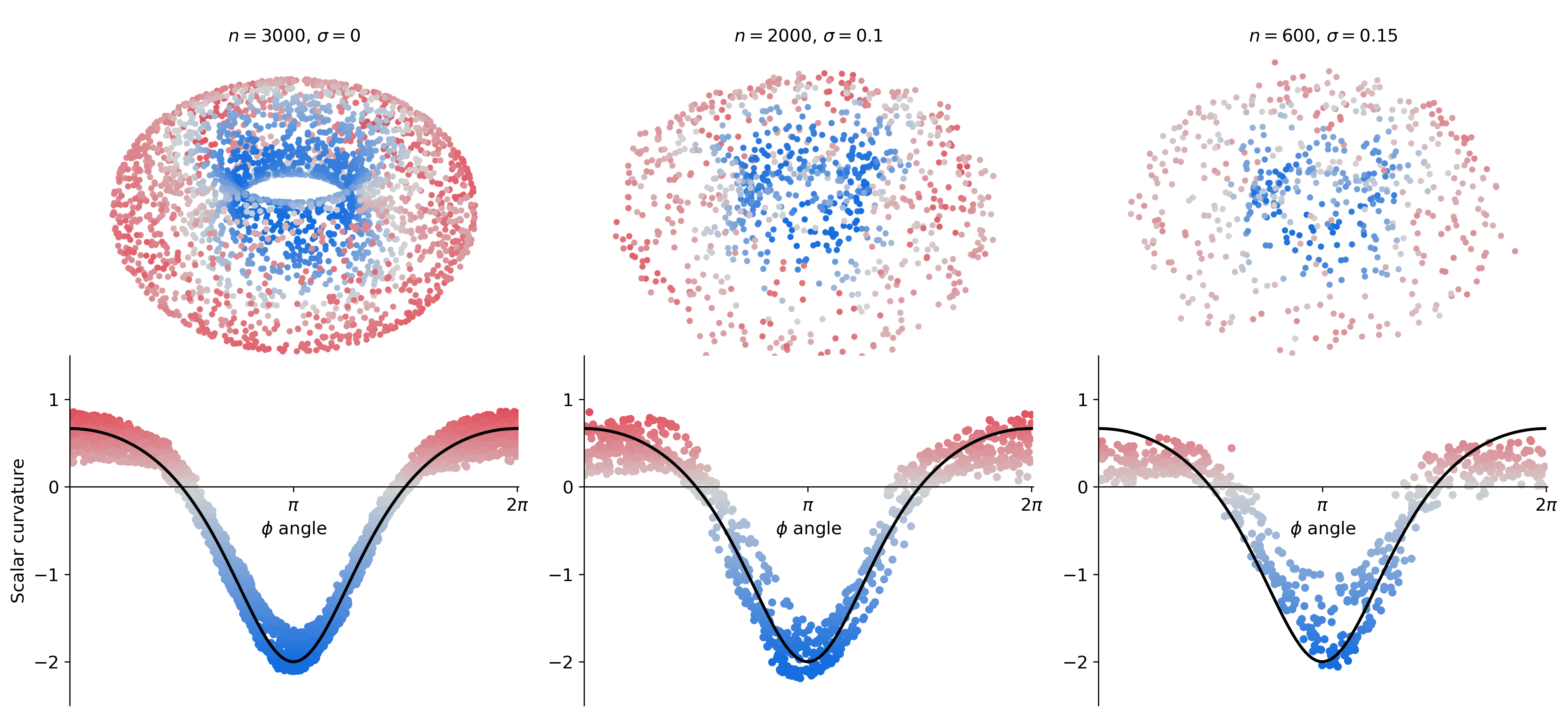}
    \captionsetup{width=0.8\linewidth}
    \caption{\textbf{Diffusion scalar curvature vs ground truth.} We compute scalar curvature on a torus using diffusion geometry (top row) and compare it to the true scalar curvature (bottom row).
    The true curvature (\ref{eq: torus scalar ground truth}) depends only on the internal angle $\phi$ of the torus, and is plotted in black alongside the diffusion geometry estimate.
    When the data contain noise, the estimated curvature at $x_i$ is plotted against the $\phi$ value of the point on the torus closest to $x_i$.}
    \label{fig:scalar torus ground truth}
\end{figure}

There have been, to our knowledge, only two previous attempts to estimate the scalar curvature on a general manifold.
Sritharan, Wang and Hormoz (SWH) \cite{sritharan2021computing} estimated the scalar curvature using a similar extrinsic approach to the one described above, but used local linear and quadratic regression to estimate the tangent spaces and Hessian.
More recently, Hickok and Blumberg (HB) \cite{hickok2023intrinsic} presented a method based on the volume comparison properties of the scalar curvature.
There are also several surface-specific methods for computing scalar curvature \cite{asao2021curvature, yang2007direct, merigot2010voronoi, khameneifar2019curvature}, which generally follow a similar approach to the SWH curvature, and, in our experiments, do not perform better than it, so are not presented here.
There are several other statistics for the curvature of data that are not explicitly related to the Riemannian geometric curvature, such as \q{diffusion curvature} \cite{bhaskar2022diffusion} which also uses diffusion.

Since the SWH curvature also follows the \textit{hard neighbourhood paradigm} of estimating differential objects through local regression in a neighbourhood, it comes with a parameter that controls neighbourhood size.
SWH uses a \q{standard error} parameter that lets the user control the target precision of the method: smaller values are more precise but less robust.
Conversely, the diffusion geometry scalar curvature is not parameterised and does not have to make the \textit{hard neighbourhood} trade-off between accuracy and robustness.
The HB approach is intrinsic, so does not need an embedding of the data, but, in our experiments, is significantly less accurate than the other two methods.

We compare the diffusion geometry and SWH scalar curvatures by the same experiment as in Figure \ref{fig:LPCA comparison}: sampling data from a torus and comparing the computed estimates to ground truth.
We use two different values of standard error parameter, 0.02 and 0.1, in the SWH method.
We average the results over 10 runs and present them in Figure \ref{fig:scalar curvature comparison}.

\begin{figure}[!h]
    \centering
    \includegraphics[width=\linewidth]{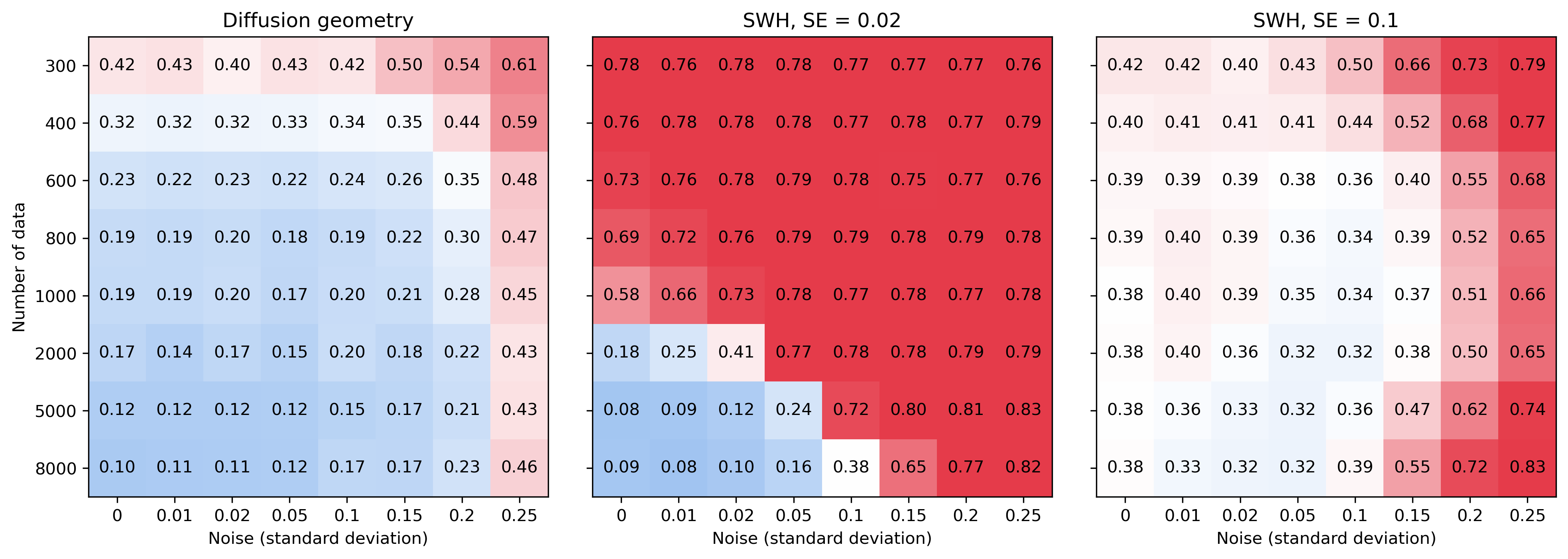}
    \captionsetup{width=0.8\linewidth}
    \caption{\textbf{Diffusion geometry vs Sritharan/Wang/Hormoz scalar curvature.}
    We compute the scalar curvature of a torus in $\R^3$ with different sampling densities and noise levels, using diffusion geometry and the SWH method.
    We compare the estimates to the true scalar curvature (\ref{eq: torus scalar ground truth}) and record the mean absolute error averaged over 10 runs.
    The SWH curvature is computed for the standard error parameters 0.02 and 0.1: this choice presents a trade-off between accuracy and robustness.
    Diffusion geometry does not need parameter selection and attains comparable performance on perfect data, but deals significantly better with noise and sparsity.}
    \label{fig:scalar curvature comparison}
\end{figure}

Just like LPCA, the SWH curvature attains very high accuracy with near-perfect data, but there is a significant trade-off between accuracy and robustness to noise and density.
Conversely, the diffusion geometry scalar curvature has comparable accuracy for high-density and low-noise data but still performs very well when the data is sparse and noisy.
The two noisy examples in Figure \ref{fig:scalar torus ground truth} are much too sparse and noisy for SWH, but can be computed very accurately with diffusion geometry.

Even though the diffusion geometry method presented here is manifold-specific, these strong statistical results suggest that it could provide meaningful curvature analysis on real-world data.

\subsection{Ricci and Riemann curvature}

In dimension 2, the Ricci and Riemann curvatures are just multiples of the scalar curvature.
This relationship is respected by our methods, and so the above results demonstrate the robust estimation of all three curvature tensors in dimension less than 3.

The Ricci curvature starts to differ from the scalar curvature in dimension 3.
We tested the Ricci estimate given in Subsection \ref{section: curvature estimators} on several examples, but obtaining a high enough density to really observe its deviation from the scalar curvature requires $n$ to be huge.
Namely, while the estimate for scalar curvature remained accurate, the second component of Ricci curvature was often small or inaccurate for moderate sizes of data.
We could not find any interesting examples where $n < 50,000$, which is too large to be practical.

There are, to our knowledge, fewer directly comparable existing methods for Ricci curvature.
The papers \cite{ache2019ricci} and \cite{ache2022approximating} describe an estimator for it, but do not test it on data.
There is a related body of work on graph curvature, of which the Ollivier Ricci curvature \cite{ollivier2007ricci} has been applied to point clouds by turning them into graphs.
This defines a Ricci curvature measure on each edge of the graph, which converges pointwise \cite{PhysRevResearch, trillos2023continuum}.
However, like the scalar curvature method of \cite{hickok2023intrinsic} mentioned above, these graph methods use only the pairwise distances between the data and not the original ambient coordinates.
These purely intrinsic methods seem to require much larger and cleaner datasets to obtain good results than the extrinsic methods presented here.

The Riemann curvature only differs from the Ricci in dimension 4, where obtaining a sufficiently dense sample to notice the difference with these methods seems infeasible.

\section{Limitations of non-local data analysis}

The methods presented here follow the standard \q{non-local} paradigm for geometric and topological data analysis.
That is, they approximate \textit{local} objects like differential operators (which depend only on a point $x \in \M$) with \textit{non-local} kernels, which take into account the values in a neighbourhood of $x$.
This approach is fundamentally cursed by dimensionality, because its success depends on having a sufficiently dense sample of a neighbourhood, which is infeasible in high dimensions (see discussion in \ref{sub: apology}).
To estimate first-order information, like tangents and dimensions, these methods require a dense sample of a large enough neighbourhood for the kernel to \q{see} the direction of the tangent space.
The effect on second-order information like curvature is more glaring, where the kernel needs to additionally observe the relationships between all of those tangent spaces.
The higher-order Ricci and Riemann curvature tensors, which further stratify the different directions of curvature, need yet more data to be accurate.

While the results in this paper demonstrate successful estimation of scalar curvature in low intrinsic dimensions, and could still be improved with further engineering, it seems unlikely that the ceiling on the ability of non-local methods is that much higher.
Using the intuition that a ball of radius $r$ in dimension $d$ has volume $\ord(r^d)$, we see that \textit{small} balls in \textit{high} dimensions have vanishingly small volumes, and so the neighbourhoods of our data will become exponentially sparse for large $d$.
As such, genuinely high-dimensional data analysis may only be possible if the non-local approach is replaced.

Surely, the most promising candidate for a new paradigm would be to use neural networks that contain the right inductive biases to \q{connect} local data points without having to explicitly measure the distances between them.
This kind of approach has already been applied to dimension estimation in \cite{stanczuk2022your}, and may prove useful for computing other geometric quantities like curvature, although we will leave that discussion for future work.

\section{Conclusions}

We have introduced a range of novel computational geometry tools for data on manifolds, based on diffusion geometry.
While these new methods attain comparable results to the existing state-of-the-art on low-noise, dense data, they significantly outperform existing methods in the presence of noise or sparsity.

We have consistently found that methods that follow the \textit{hard neighbourhood paradigm} must trade off precision against robustness, and come with a user-defined parameter to control this.
Even the optimal choice of this parameter is usually not enough to attain high accuracy on very sparse and noisy data.
Conversely, diffusion geometry is not a \textit{hard neighbourhood} method, so enjoys both accuracy and robustness simultaneously while also being parameter-free.

Important future research questions include: can we define and measure curvature on non-manifold data, and can local dimensionality and curvature measures work effectively as features for geometric machine learning?

\section*{Acknowledgements}

I want to thank Jeff Giansiracusa and Yue Ren for their generous comments and feedback.
This work was carried out as part of EPSRC grant EP/Y028872/1, \textit{Mathematical Foundations of Intelligence: An \qq{Erlangen Programme} for AI}.


\section*{Appendix: other dimension benchmark results}

The benchmark used for the results in Table \ref{tab:benchmark-comparison} included six manifolds from the benchmark proposed in \cite{campadelli2015intrinsic}, as well as six additional ones.
For completeness, we include the results obtained \textit{only on the benchmark data from} \cite{campadelli2015intrinsic} in Table \ref{tab:benchmark-comparison limited}.
There are seven manifolds in the benchmark from \cite{campadelli2015intrinsic} of dimension at most 3, although one of these, the \q{M13b Spiral}, was too challenging for any of the methods considered to be accurate with any amount of noise, so is excluded.

\setlength{\tabcolsep}{3pt}
\begin{table}[h!]
\centering
\resizebox{\textwidth}{!}{%
\begin{tabular}{l|ccc|ccc}
&& $n = n_{small}$ &&& $n = n_{large}$ & \\
\textbf{Method} & $\sigma = 0$ & $\sigma = 0.5 \sigma_{max}$ & $\sigma = \sigma_{max}$ & $\sigma = 0$ & $\sigma = 0.5 \sigma_{max}$ & $\sigma = \sigma_{max}$ \\
\midrule
Correlation Dimension \cite{grassberger1983measuring}
& 
$\color[HTML]{e53b4a}\mathbf{100.0\pm0.0}$ & $\color[HTML]{e53b4a}\mathbf{75.8\pm8.3}$ & $16.7\pm0.0$ & $83.3\pm0.0$ & $32.5\pm3.6$ & $0.0\pm0.0$ \\
MADA \cite{farahmand2007manifold}
& $83.3\pm0.0$ & $20.8\pm7.2$ & $0.0\pm0.0$ & $83.3\pm0.0$ & $0.0\pm0.0$ & $0.0\pm0.0$ \\
LPCA \cite{cangelosi2007component, fan2010intrinsic, fukunaga1971algorithm}
& $16.7\pm0.0$ & $16.7\pm0.0$ & $0.0\pm0.0$ & $16.7\pm0.0$ & $16.7\pm0.0$ & $0.0\pm0.0$ \\
MLE \cite{haro2008translated, hill1975simple, levina2004maximum}
& $83.3\pm0.0$ & $33.3\pm0.0$ & $0.0\pm0.0$ & $83.3\pm0.0$ & $0.8\pm3.6$ & $0.0\pm0.0$ \\
MiND-ML \cite{rozza2012novel}
& $\color[HTML]{166dde}\mathbf{84.2\pm3.6}$ & $3.3\pm6.7$ & $0.0\pm0.0$ & $\color[HTML]{e53b4a}\mathbf{100.0\pm0.0}$ & $16.7\pm0.0$ & $0.8\pm3.6$ \\
DANCo \cite{ceruti2012danco}
& $83.3\pm0.0$ & $0.0\pm0.0$ & $0.0\pm0.0$ & $\color[HTML]{166dde}\mathbf{97.5\pm6.0}$ & $0.8\pm3.6$ & $0.0\pm0.0$ \\
TwoNN \cite{facco2017estimating}
& $\color[HTML]{e53b4a}\mathbf{100.0\pm0.0}$ & $16.7\pm0.0$ & $15.8\pm3.6$ & $\color[HTML]{e53b4a}\mathbf{100.0\pm0.0}$ & $16.7\pm0.0$ & $16.7\pm0.0$ \\
MOM \cite{amsaleg2018extreme}
& $50.0\pm0.0$ & $33.3\pm0.0$ & $16.7\pm0.0$ & $66.7\pm0.0$ & $\color[HTML]{166dde}\mathbf{46.7\pm6.7}$ & $0.0\pm0.0$ \\
Fisher Separability \cite{albergante2019estimating}
& $33.3\pm0.0$ & $33.3\pm0.0$ & $\color[HTML]{166dde}\mathbf{30.8\pm6.0}$ & $33.3\pm0.0$ & $33.3\pm0.0$ & $\color[HTML]{166dde}\mathbf{30.0\pm6.7}$ \\
TLE \cite{amsaleg2019intrinsic}
& $83.3\pm0.0$ & $0.0\pm0.0$ & $0.0\pm0.0$ & $83.3\pm0.0$ & $0.0\pm0.0$ & $0.0\pm0.0$ \\
\midrule
Diffusion Geometry  & 
$77.5\pm7.9$ & $\color[HTML]{166dde}\mathbf{66.7\pm0.0}$ & $\color[HTML]{e53b4a}\mathbf{42.5\pm11.8}$ & $81.7\pm5.0$ & $\color[HTML]{e53b4a}\mathbf{66.7\pm0.0}$ & $\color[HTML]{e53b4a}\mathbf{36.7\pm16.0}$ \\
\end{tabular}
}

\captionsetup{width=0.8\linewidth}
\caption{\protect\textbf{Manifold dimension estimation on the benchmark data from \protect\cite{campadelli2015intrinsic} only.}
Average accuracies and standard deviations (\%) from 20 runs over 12 benchmark manifolds with dimensions 1, 2, and 3.
The standard deviations are computed for each manifold and then averaged overall, so 0 means that the method returned the same value for each manifold on every run.
We sample $n$ data randomly for a small and large value of $n$, and for zero, medium, and large amounts of noise.
The {\color[HTML]{e53b4a}\textbf{red}} and {\color[HTML]{166dde}\textbf{blue}} numbers indicate the {\color[HTML]{e53b4a}\textbf{best}} and {\color[HTML]{166dde}\textbf{second-best}} accuracies in each column.}
\label{tab:benchmark-comparison limited}
\end{table}




\bibliographystyle{plain}
\bibliography{bibliography}

\end{document}